\newtheorem{remark}{Remark}
\title{A fast direct solver for high frequency scattering from a large cavity in two dimensions}
\author{Jun Lai\thanks{Courant Institute of Mathematical Sciences, New York University, NY 10012} \and Sivaram Ambikasaran\footnotemark[1] \and Leslie F. Greengard\footnotemark[1] \thanks{Simons Center for Data Analysis, Simons Foundation, New York, NY 10010}}
\date{}
\begin{document}
\maketitle
\pagestyle{myheadings}
\markboth{Jun Lai, Sivaram Ambikasaran, Leslie F. Greengard}{Fast direct solver for high frequency scattering from a large cavity in $2$D}
\begin{abstract}
We present a fast direct solver for the simulation of electromagnetic scattering from 
an arbitrarily-shaped, large, empty cavity embedded in an infinite perfectly conducting half space. 
The governing Maxwell equations are reformulated as a well-conditioned second kind integral equation 
and the resulting linear system is solved in nearly linear time using a hierarchical matrix factorization
technique. We illustrate the performance of the scheme with several numerical examples for 
complex cavity shapes over a wide range of frequencies.
\end{abstract}

\begin{keywords}
electromagnetic scattering, fast direct solver, large cavity
\end{keywords}

\section{Introduction}

Electromagnetic scattering from large cavities has been studied extensively over the 
years~\cite{HGA2,HGA1,HGA3,GW1,GWZ,GLin,JIN1,JIN2}, due to the widespread presence of cavities
in practical settings. It is of particular interest in radar cross section (RCS) analysis,
both for accentuating a signal (in tracking a vehicle) and for its mitigation (in 
electromagnetic interference and stealth design). Cavities play an important role in these contexts
because of the well-known fact that the intensity of the echo wave is often
dominated by scattering from cavity-like components~\cite{RCS}, such as the exhaust nozzle
or engine inlet of an aircraft. 
In the context of design, we refer the reader to \cite{BJL2,BJL1},
where RCS enhancement or reduction was carried out through the use of an optimization procedure
based on a Newton-type method. At each iteration, a large-scale scattering problem has to be solved,
dominating the net cost.
A second application is non-destructive testing to determine the shape of an existing cavity. 
The corresponding stability analysis was initially studied in~\cite{GKZ}. 
Numerical inversion again requires an efficient 
solver that works over a range of frequencies. To obtain fine features inside the cavity,
high frequency measurements are required, making the problems large, oscillatory
and progressively more ill-conditioned.
In short, efficient and accurate numerical methods for modeling
in the presence of \emph{complex, arbitrarily-shaped cavities} are becoming essential.

Integral equation methods are 
very natural choices for the solution of scattering problems because they discretize the 
scatterer alone and are able to impose outgoing radiation conditions without the need for
truncating the spatial domain and imposing artificial boundary conditions. 
In the case of cavities in a conducting half-space,
a variety of integral formulations exists and we refer the readers to 
Bao \textit{et al}~\cite{HGA2}, Asvestas \textit{et al}~\cite{AK1994}, 
Willers \textit{et al}~\cite{WW1987}, Chandler-wilde \textit{et al}~\cite{Chandle05} and 
the references therein for a complete discussion. 
Here, we simply note that the
choice of integral equation has a great impact on the accuracy of the numerical 
discretization~\cite{Helsing2008} and the condition number of the resulting linear system. 
In this article, we propose a boundary integral formulation that leads in a straightforward
way to a well-posed, high-order discretization. Like the formulation of~\cite{HGA2}, 
we impose continuity conditions on a ``transparent" dome covering the cavity, which
reduces the problem to one posed in a bounded domain. 
A principal difference is that, in our case, we introduce a non-physical charge density on
part of the ground plane which permits high order accuracy - avoiding the difficulties 
introduced by signularities at \textit{triple-points}, points that lie at the intersection of more 
than two subdomains (see Fig. \ref{figure_cavity_shape}).
 
Historically, the major challenge with integral formulations has been that the corresponding 
linear system is dense so that solving by conventional linear solvers is expensive, requiring 
$\mathcal{O}(N^3)$ work for a $N \times N$ linear system. That complexity barrier was overcome
by using iterative techniques based on Krylov subspace methods~\cite{arnoldi1951principle, hestenes1952methods, saad1986gmres, paige1975solution, van1992bi, freund1991qmr, freund1993transpose},
coupled with fast matrix-vector product techniques such as the fast multipole method 
(FMM)~\cite{coifman1993fast,greengard1997new,greengard1987fast,cheng1999fast}, 
tree based algorithms~\cite{barnes1986hierarchical}, 
panel clustering~\cite{hackbusch1989fast}, 
FFT~\cite{cooley1965algorithm}, 
wavelet based methods~\cite{mallat1989theory}, and a host of others. 
Though these techniques have a number of attractions, the number of iterations required to 
achieve a specified accuracy is highly problem-dependent. In the context of 
electromagnetic scattering, if the geometry is complicated and if the frequency of the incident 
field is high, the number of iterations can be extremely large, so that the methods no longer
behave linearly and require large amounts of storage. Recently, there has been an increasing 
focus on fast direct solvers~\cite{greengard2009fast} for dense linear systems arising from 
integral equations. This is an important and active area of research and we refer the reader 
to three relevant (and related) formualtions - those based on hierarchical off-diagonal 
low-rank matrices (HODLR)~\cite{ambikasaran2013fast, amirhossein2014fast, kong2011adaptive}, 
hierarchically semi-separable (HSS) or hierarchically block-separable (HBS) matrices~\cite{chandrasekaran2006fast1,chandrasekaran2006fast,ho2012fast,martinsson2009fast,martinsson2005fast}, $\mathcal{H}$, 
and $\mathcal{H}^2$ matrices~\cite{bebendorf2005hierarchical,borm2003hierarchical,borm2003introduction,hackbusch2001introduction}. In this article, since the integral equation we are solving is on a 
$1$D manifold, we rely on the fast direct solver discussed in 
Ambikasaran and Darve~\cite{ambikasaran2013fast}, which scales almost 
linearly (as $\mathcal{O}(N \log^2 N)$) in the number of unknowns even for problems that
are hundreds of wavelengths in size.

We restrict our attention here to time-harmonic scattering over a wide range of frequencies 
for a $2$D cavity embedded in a ground plane, as shown in Figure~\ref{figure_cavity_shape}, 
where the boundary of the cavity and the ground plane are perfectly conducting. 
The cavity is \textit{empty}, i.e., the permittivity $\epsilon$ and permeability $\mu$ are 
constant everywhere inside the cavity and equal to that of the upper half-space.

\begin{figure}[!htbp]
\begin{center}
\includegraphics[scale=0.5]{./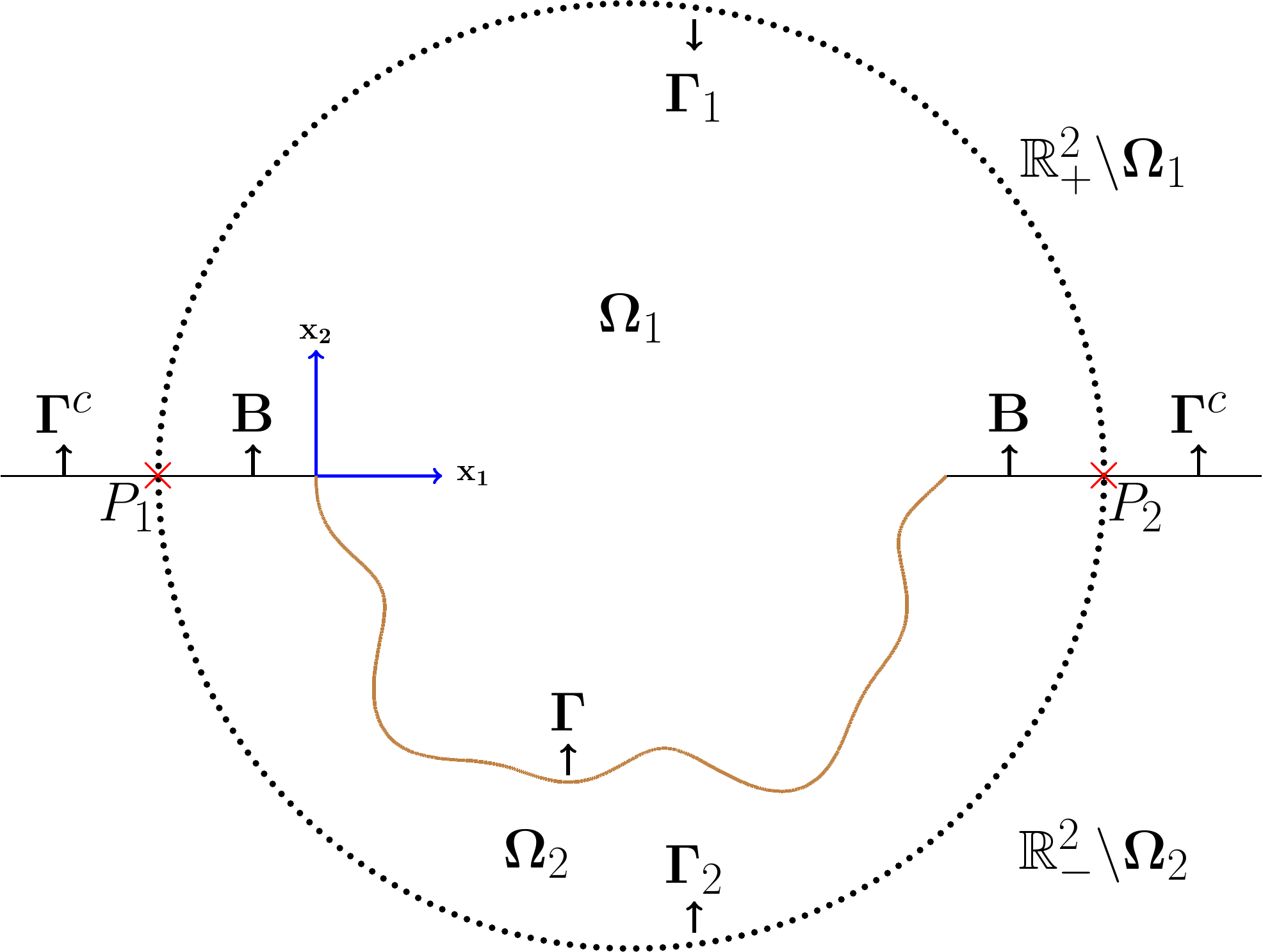}
\caption{Geometry of the cavity shape. Note that $P_1$ and $P_2$ are triple-points.
They lie at the intersection of domains $\Omega_1$, $\Omega_2$, $R_+^2\setminus \Omega_1$, and 
$R_-^2\setminus \Omega_2$.}
\label{figure_cavity_shape}
\end{center}
\end{figure}

It is well known that, in $2$D, the wave can be decomposed into 
a transverse magnetic (TM) polarization component and a transverse electric (TE) polarization component. 
Maxwell's equations reduce in this setting to a scalar Helmholtz 
equation for the $z$-component the magnetic (TE) or electric (TM) field, respectively.
We restrict our attention to the TM polarization and denote the 
$z$-component of the electric field by $u$, but note that the treatment of the TE 
polarization is very similar. 

We seek to 
determine the unknown scattered wave $u^s$, emanating from the cavity $\Omega_1$ in response to 
a known incident wave $u^{i}$. The 
governing equation is the 
Helmholtz equation:
\begin{equation}
\label{holm}
\left\{\begin{array}{rl}
\Delta u+k^2u&= 0 \mbox{ in } \Omega_1\cup R_+^2  \\
u &= 0 \mbox{ on } \Gamma\cup \Gamma^c. 
\end{array}\right.
\end{equation} 
where $R_+^2$ denotes the upper half space, $\Gamma$ is the boundary of the cavity,  $\Gamma^c$ is the ground plane and $k$ is the wavenumber. The wavenumber $k$ depends on the given angular frequency $\omega$ (time harmonic dependence is $e^{-i\omega t}$), i.e., $k= \omega\sqrt{\varepsilon\mu}$. 
where, as noted above, the permittivity $\epsilon$ and permeability $\mu$ are  assumed to be 
constant.

The total field $u$ is considered as the summation of three parts~\cite{JIN3}: the incident field $u^{i}$, the reflected field $u^{r}$ and the scattered field $u^s$. 
$u^i$ is known, and typically defined as a plane wave or the wave induced by known sources 
in the upper half-space.
$u^r$ denotes the wave reflected by a perfectly conducting half-space (without a cavity) and
can be computed analytically from $u^i$.
The only unknown, therefore, is the scattered field $u^s$, which satisfies the 
Helmholtz equation~\eqref{holm} as well as the Sommerfeld radiation condition:
\begin{equation}\label{eq4}
\lim_{r\rightarrow \infty} \sqrt{r} \bigg(\frac{\partial u^s}{\partial r}-iku^s \bigg) = 0 \, ,
\end{equation}
where $r = \sqrt{x_1^2+x_2^2}$. 

The paper is organized as follows. In Section~\ref{section_formulation}, we describe
the cavity problem in more detail. We propose a new second kind integral formulation that 
leads to a stable numerical algorithm. In addition, we prove uniqueness for the equation,
showing that the the formulation is well-posed.
Section~\ref{section_numerical_discretization} discusses the numerical discretization of the 
integral equation, while 
Section~\ref{section_fast_direct_solver} discusses the hierarchical, fast, direct solver
that relies on specific properties of the matrix arising from discretization of our integral equation. 
Numerical examples are presented in Section~\ref{section_numerical_results}, demonstrating
the efficiency of the method and Section~\ref{section_conclusion} contains some concluding
remarks.

\section{The cavity problem}
\label{section_formulation}

Consider a time-harmonic wave incident on the cavity $\Omega_1$, embedded in an infinite ground 
plane $\Gamma^c$, illustrated in Fig. \ref{figure_cavity_shape}. 
The boundary components $\Gamma$ and $\Gamma^c$ are both assumed to be perfect conductors. 
We also assume that $\Gamma$ is defined by a piecewise smooth curve and that none of the 
corners between adjacent smooth components involve cusps (that is,
the angle at each corner is greater than $0$ and less than $2\pi$).
 
When the incoming field is a plane wave, we have 
$u^{i} = e^{ik(\cos\theta x_1-\sin\theta x_2)}$,
and $u^{r} = e^{ik(\cos\theta x_1+\sin\theta x_2)}$,
where $\theta$ is the angle of incidence with respect to the positive $x_1$ axis. 
In this case, the scattered wave $u^s$ satisfies the equation:
\begin{equation}\label{eq5}
\left\{\begin{array}{rl}
\Delta u^s+k^2u^s&= 0 \mbox{ in } \Omega_1\cup R_+^2\\
u^s &= g \mbox{ on } \Gamma\cup \Gamma^c
\end{array}\right.
\end{equation}
along with the Sommerfeld radiation condition \eqref{eq4}, where $g = -(u^{i}+u^{r})$. 

\subsection{The integral formulation}

A variety of integral formulations have been proposed in the literature for the cavity problem. 
Those discussed in~\cite{HGA2,WW1987}, for example, are based on Green's identities. 
Another formulation, based on the method of images, is discussed in~\cite{AK1994}. Our 
formulation is based on potential theory, with the goal of deriving a second kind 
integral equation that will lead to a high order numerical scheme, even in the presence
of sharp corners. In all these approaches, it is convenient to introduce an artificial 
interface close to the cavity, which we denote here by $\Gamma_1$ (illustrated by the 
dotted line in Fig. \ref{figure_cavity_shape}).
One does not need to introduce artificial boundary conditions
on this interface, however. One simply imposes continuity conditions so that 
there is a representation for the scattered wave in the exterior of $\Gamma_1$ as an integral
over the artificial interface itself.
$\Gamma_1$ needs to be chosen so that its reflection with respect to the ground plane, $\Gamma_2$, 
does not intersect the cavity. We denote by $B$ the part of the ground plane $\Gamma^c$ that 
connects $\Gamma_1$ and $\Gamma$ and define $\Omega_1$ to be the domain enclosed by 
$\Gamma\cup B\cup \Gamma_1$. Its complement is the unbounded domain $R_+^2\backslash \Omega_1$.

We let $\Phi(\mathbf{x},\mathbf{y})$ denote the (radiating) free space Green's function for the 
Helmholtz equation and we let $\Phi^{H}(\mathbf{x},\mathbf{y})$ 
denote the Green's function satisfying homogeneous Dirichlet conditions on the
half space. It is well-known that
\begin{align}
\Phi(\mathbf{x},\mathbf{y}) &= \frac{i}{4}H^{(1)}_0(k|\mathbf{x}-\mathbf{y}|) \, , \\
\Phi^{H}(\mathbf{x},\mathbf{y}) &= \frac{i}{4}H^{(1)}_0(k|\mathbf{x}-\mathbf{y}|) - \frac{i}{4}H^{(1)}_0(k|\mathbf{x}-\mathbf{y}'|) \, ,
\end{align}
where 
$H^{(1)}_0$ is the Hankel function of the first kind of order zero and 
$\mathbf{y}'=(y_1,-y_2)$ is the image point for $\mathbf{y} = (y_1,y_2)$.

A natural representation for $u^s$ in the unbounded domain $R_+^2\backslash \Omega_1$ is
\begin{equation}\label{eq6}
u^s = \mathcal{S}^{H}_{\Gamma_1}\sigma + \mathcal{D}^{H}_{\Gamma_1} \mu,  \mbox{ for } \mathbf{x}\in R_+^2\backslash \Omega_1 \, ,
\end{equation}
where $\mathcal{S}^{H}$ and $\mathcal{D}^{H}$ are single and double layer potentials,
using the half-space Green's function:
\begin{align}
\mathcal{S}^{H}_{\Gamma_1}\sigma &= \int_{\Gamma_1}\Phi^H(\mathbf{x},\mathbf{y})\sigma(\mathbf{y}) ds_{\mathbf{y}} \, , \\
\mathcal{D}^{H}_{\Gamma_1}\mu &=   \int_{\Gamma_1}\frac{\partial \Phi^H(\mathbf{x},\mathbf{y})}{\partial n(\mathbf{y})}\mu(\mathbf{y}) ds_{\mathbf{y}}.
\end{align}
Here the unit normal vector $n$ on $\Gamma_1$ is assumed to be oriented toward the exterior of 
$\Omega_1$. $\sigma$ and $\mu$ are, for the moment, unknown density functions on the 
boundary $\Gamma_1$.
\begin{remark}
By the methods of images, we can rewrite $\mathcal{S}^{H}$ as:
\begin{equation}
\mathcal{S}^{H}_{\Gamma_1}\sigma = \mathcal{S}_{\Gamma_1}\sigma 
-\mathcal{S}_{\Gamma_2}\sigma
\end{equation}
where $S_{\Gamma}$ denotes single layer potential based on the 
free-space Green's function
\[ S_{\Gamma}\sigma = \displaystyle \int_{\Gamma} \Phi(\mathbf{x},\mathbf{y}) \sigma(\mathbf{y}) ds_{\mathbf{y}} \, , \]
and for $\mathbf{x}' \in\Gamma_2$, 
$\sigma(\mathbf{x}') =  \sigma(\mathbf{x})$, where $\mathbf{x}'$ is the image point for 
$\mathbf{x}$. Clearly, a similar expression holds for $\mathcal{D}^{H}$ using the free-space
double layer potential
\[ D_{\Gamma}\mu = \displaystyle \int_{\Gamma} \dfrac{\partial \Phi(\mathbf{x},\mathbf{y})}{\partial n(\mathbf{y})} \mu(\mathbf{y}) ds_{\mathbf{y}} \, . \] 
\end{remark}

Suppose now that we represent the scattered field for $\mathbf{x}\in \Omega_1$ by
\begin{equation}\label{eq7}
u^s(\mathbf{x}) = \mathcal{S}_{\Gamma_1}\sigma+ \mathcal{D}_{\Gamma_1} \mu+\mathcal{D}_{B}\mu + 
\mathcal{D}_{\Gamma}\mu  \, .
\end{equation}

Imposing the continuity of the potential $u$ and its normal derivative across
$\Gamma_1$, and using the
representations \eqref{eq6} and \eqref{eq7}, leads to 
\begin{equation}\label{eq8}
\left\{\begin{array}{c}
\mu -\mathcal{S}_{\Gamma_2}\sigma-\mathcal{D}_{\Gamma_2}\mu-\mathcal{D}_{B}\mu - \mathcal{D}_{\Gamma}\mu = 0,\\
\sigma +\mathcal{N}_{\Gamma_2}\sigma+\mathcal{T}_{\Gamma_2}\mu+\mathcal{T}_{B}\mu + \mathcal{T}_{\Gamma}\mu = 0, 
\end{array} \right.
\mbox{ for } \mathbf{x}\in \Gamma_1 \, .
\end{equation}
For this, one needs to use standard jump relations for 
the layer potentials $\mathcal{S}$ and $\mathcal{D}$ \cite{Cot2,Cot}. 
The operators $\mathcal{N}$ and $\mathcal{T}$ are defined as the normal derivatives of 
$\mathcal{S}$ and $\mathcal{D}$, respectively:
\begin{align}
\mathcal{N}_{\Gamma}\sigma &= \int_{\Gamma}\frac{\partial \Phi(\mathbf{x},\mathbf{y})}{\partial n(\mathbf{x})}\sigma(\mathbf{y})ds_{\mathbf{y}} \, , \\
\mathcal{T}_{\Gamma}\mu &= \int_{\Gamma}\frac{\partial^2\Phi(\mathbf{x},\mathbf{y})}{\partial n(\mathbf{x})\partial n(\mathbf{y})}\mu(\mathbf{y})ds_{\mathbf{y}} \, .
\end{align}
Note that the operator $\mathcal{T}$ is \textit{hypersingular} on $\Gamma$, with its value 
interpreted in the Hadamard finite part sense.
Similarly, using the representation~\eqref{eq7}, and letting $x$ approach the boundary $\Gamma\cup B$, 
yields
\begin{equation}\label{eq9}
-\frac{1}{2}\mu+\mathcal{S}_{\Gamma_1}\sigma+ \mathcal{D}_{\Gamma_1} \mu+\mathcal{D}_{B}\mu + \mathcal{D}_{\Gamma}\mu=g, \mbox{ for } \mathbf{x}\in \Gamma\cup B \, .
\end{equation}

Combining eqs.~\eqref{eq8} and~\eqref{eq9} we obtain a closed system for $\sigma$ and $\mu$. 
Unfortunately, careful analysis shows that it is not a 
Fredholm equation of the second kind. In particular, the hypersingular operators are 
unbounded at the
\textit{triple-points} $P_1$ and $P_2$ in Fig. \ref{figure_cavity_shape}. This
leads to a failure of convergence. Following the approach of \cite{GHL2014,GL2014}, we remedy the situation 
by using a non-physical representation  
near triple-points so that the resulting integral equation involves only 
the {\em difference} of two hypersingular kernels, which is easily seen to be compact. 
Thus, we propose the following, new formulation:
\begin{equation}\label{eq10}
\left\{\begin{array}{l}
u^s = \mathcal{S}^{H}_{\Gamma_1}\sigma + \mathcal{D}^{H}_{\Gamma_1} \mu + \mathcal{D}_{B} \mu
\qquad \qquad  \mbox{ for } \mathbf{x}\in R_+^2\backslash \Omega_1 \, , \\
u^s = \mathcal{S}^{H}_{\Gamma_1}\sigma+ \mathcal{D}^{H}_{\Gamma_1} \mu+\mathcal{D}_{B}\mu + \mathcal{D}_{\Gamma}\mu \ \ \ \mbox{ for } \mathbf{x}\in \Omega_1\, . 
\end{array}
\right.
\end{equation}

We refer to the representation as non-physical because the terms we have added
($\mathcal{D}_{B} \mu$ for the exterior field and 
$-\mathcal{S}_{\Gamma_2} \sigma - \mathcal{D}_{\Gamma_2} \mu$ for the interior field) that
involve boundary components that do not actually impinge on the domain.
The advantage we obtain is a cancellation of the hypersingular terms.
To see this, imposing the continuity and boundary conditions as above, and using the standard
jump relations, we obtain the system:
\begin{equation}\label{eq11}
\left\{\begin{array}{rl}
\mu - \mathcal{D}_{\Gamma}\mu &= 0, \quad \mbox{ for } \mathbf{x} \in \Gamma_1\\
\sigma + \mathcal{T}_{\Gamma}\mu &= 0,\quad \mbox{ for } \mathbf{x} \in \Gamma_1\\
-\frac{1}{2}\mu+\mathcal{S}^{H}_{\Gamma_1}\sigma+ \mathcal{D}^{H}_{\Gamma_1} \mu+\mathcal{D}_{B}\mu + \mathcal{D}_{\Gamma}\mu &= g, \quad \mbox{ for } \mathbf{x}\in B\cup \Gamma
\end{array}\right. \, .
\end{equation}

Although the representation~\eqref{eq10} is slightly more involved than \eqref{eq6} and \eqref{eq7}, 
note that the resulting system of integral equations is actually simpler. 
In particular, the first two equations in \eqref{eq11} imply that $\mu$ and $\sigma$ on $\Gamma_1$ 
are directly determined by the value of $\mu$ on $\Gamma$. 
More importantly, the system is a Fredholm equation of the second kind in a suitably-defined space.
The analysis is somewhat technical, since operators that are compact on smooth domains are only 
bounded on domains with corners. That is the case here, and we refer the reader to 
\cite{Brem2012} for further details.
In the next section, we discuss existence and uniqueness, followed by a discussion of numerical
discretization. 

\begin{remark}
In \cite{AK1994}, a related formulation was developed that used only
the unknown $\mu$ on $B\cup \Gamma$ in \eqref{eq10}. This, however, requires that the cavity remain
strictly below the ground plane. Our approach works for an arbitrary cavity shape. 
\end{remark}

\subsection{Existence and uniqueness}
To simplify the analysis, we assume that the boundary of the cavity 
$B\cup\Gamma$, $\Gamma_1$ and $\Gamma_2$ are $C^2$ smooth curves, in which case the solution $u$ 
to eq. \eqref{eq5} lies in $C^2(R_+^2\cup \Omega_1)\cup C(\overline{R_+^2\cup \Omega_1})$ (see 
\cite{WW1987}) so that the standard Fredholm theory applies. Thus,
to establish uniqueness for the system \eqref{eq11}, it suffices to show that 
$\mu=0$ and $\sigma= 0$ if $g=0$.
Extension of the proof to the piecewise-smooth case is straightforward,
and consists largely in a switch to the corresponding Sobolev space in order that the Fredholm
theory be applicable.

\begin{theorem} Given $k>0$, the system
\begin{equation}\label{eq12}
\left\{\begin{array}{rl}
\mu - \mathcal{D}_{\Gamma}\mu &= 0, \quad \mbox{ for } \mathbf{x} \in \Gamma_1\\
\sigma + \mathcal{T}_{\Gamma}\mu &= 0,\quad \mbox{ for } \mathbf{x} \in \Gamma_1\\
-\frac{1}{2}\mu+\mathcal{S}^{H}_{\Gamma_1}\sigma+ \mathcal{D}^{H}_{\Gamma_1} \mu+\mathcal{D}_{B}\mu + \mathcal{D}_{\Gamma}\mu &= 0, \quad \mbox{ for } \mathbf{x}\in B\cup \Gamma
\end{array} \right.
\end{equation}
has only the trivial solution for $\mu \in C(B) \cup C(\Gamma)\cup C(\Gamma_1)$ and $\sigma \in C(\Gamma_1)$.
\end{theorem}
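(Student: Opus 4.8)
The plan is to reduce the homogeneous integral system \eqref{eq12} to a homogeneous boundary value problem for the Helmholtz equation and then invoke uniqueness for the physical scattering problem. First I would take any solution $(\sigma,\mu)$ of \eqref{eq12} and use it to \emph{define} a field $u^s$ through the two representations in \eqref{eq10}: the exterior formula in $R_+^2\setminus\Omega_1$ and the interior formula in $\Omega_1$. The three equations of \eqref{eq12} are precisely the statements that glue these two pieces into a single admissible field. Indeed, applying the standard jump relations for $\mathcal{S}$, $\mathcal{D}$, $\mathcal{N}$ and $\mathcal{T}$ on $\Gamma_1$, the first equation $\mu=\mathcal{D}_\Gamma\mu$ expresses continuity of $u^s$ across $\Gamma_1$, while the second equation $\sigma=-\mathcal{T}_\Gamma\mu$ expresses continuity of $\partial u^s/\partial n$ across $\Gamma_1$. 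Since $u^s$ solves the Helmholtz equation on each side of $\Gamma_1$ and both it and its normal derivative are continuous there, $u^s$ extends to a weak, hence (by elliptic regularity) classical, solution of $\Delta u^s+k^2u^s=0$ on all of $\Omega_1\cup R_+^2$.

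Next I would verify the boundary and radiation conditions. The third equation of \eqref{eq12}, together with the interior jump of $\mathcal{D}_\Gamma\mu$ and $\mathcal{D}_B\mu$, gives $u^s=0$ on $\Gamma\cup B$. On the remaining part of the ground plane I would use that $\Phi^H$ satisfies homogeneous Dirichlet conditions there by construction, and that the kernel $\partial\Phi(\mathbf{x},\mathbf{y})/\partial n(\mathbf{y})$ of $\mathcal{D}_B$ vanishes whenever both $\mathbf{x}$ and $\mathbf{y}$ lie on $\Gamma^c$ (the normal on $B$ being vertical); hence $u^s=0$ on all of $\Gamma^c$, so $u^s=0$ on the entire physical boundary $\Gamma\cup\Gamma^c$. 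Because every layer potential in \eqref{eq10} is built from the radiating Green's function, $u^s$ automatically satisfies the Sommerfeld condition \eqref{eq4}. Uniqueness for the homogeneous exterior Dirichlet problem in the half-space-with-cavity geometry --- established by Green's first identity over a large half-disk, the radiation condition, Rellich's lemma and unique continuation --- then forces $u^s\equiv 0$ throughout $\Omega_1\cup R_+^2$.

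Finally I would recover the densities. Having $u^s\equiv 0$ in the physical region, I would read off Cauchy data from the jump relations: the potential defined by the interior formula has vanishing value and normal derivative on the $\Omega_1$-side of $\Gamma$, $B$ and $\Gamma_1$, so on the complementary (non-physical) side its boundary value equals $\mu$ on all of $\partial\Omega_1$, while its normal derivative equals $-\sigma$ on $\Gamma_1$ and vanishes on $\Gamma\cup B$ (up to orientation). Using the first two equations of \eqref{eq12} to express $\mu|_{\Gamma_1}$ and $\sigma|_{\Gamma_1}$ through $\mu|_{\Gamma}$, the task reduces to showing that $\mu\equiv 0$ on $\Gamma\cup B$.

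I expect this last step to be the main obstacle. One must show that the field continued into the complementary region satisfies a homogeneous problem there --- again via Green's identity, the radiation condition, and a second application of Rellich's lemma together with unique continuation --- so that its Cauchy data, and hence $\mu$ and $\sigma$, must vanish. The delicacy is twofold: the exterior of $\Omega_1$ is a single connected unbounded region on which the continued potential is \emph{not} identically the (vanishing) physical field, so the argument genuinely couples the data on $\Gamma_1$ to that on $\Gamma$ through the relations in \eqref{eq12}; and, in the piecewise-smooth case, the corners and triple-points of Fig.~\ref{figure_cavity_shape} require working in the appropriate Sobolev setting, for which I would appeal to the mapping properties recorded in \cite{Brem2012}.
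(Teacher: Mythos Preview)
Your first half is essentially the paper's argument: define $u^s$ by \eqref{eq10}, use the three equations of \eqref{eq12} and the jump relations to see that $u^s$ is a radiating Helmholtz solution in $R_+^2\cup\Omega_1$ with zero Dirichlet data on $\Gamma\cup\Gamma^c$, then apply Green's identity on a large half-disk, Rellich's lemma, and unique continuation to conclude $u^s\equiv 0$ there. This part is fine.

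The gap is in your second half. You correctly locate the difficulty---showing $\mu=0$ on $\Gamma\cup B$ by analyzing the continued potential in a complementary region---but you do not say \emph{which} complementary region, and the naive choice does not work. If you simply extend the interior formula from \eqref{eq10} to all of $R^2\setminus\overline{\Omega_1}$, that function is \emph{not} continuous across $\Gamma_1$ or across the image curve $\Gamma_2$ (both $\mathcal{S}^H_{\Gamma_1}$ and $\mathcal{D}^H_{\Gamma_1}$ carry densities on $\Gamma_2$ as well), so there is no single unbounded domain on which you have a radiating solution with enough zero Cauchy data to run Rellich a second time. Your remark that ``the continued potential is not identically the vanishing physical field'' shows you are aware of a problem, but you have not supplied the mechanism that resolves it.

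The paper's missing ingredient is a reflection argument. One defines $v$ piecewise by \eqref{eq10} in $\Omega_1\cup\Omega_2$ and in $R^2\setminus(\Omega_1\cup\Omega_2)$, where $\Omega_2$ is the mirror image of $\Omega_1$ below the ground plane. Having $v\equiv 0$ in $R_+^2\cup\Omega_1$ and the odd symmetry of $\Phi^H$ gives $v\equiv 0$ in all of $R^2\setminus(\Omega_1\cup\Omega_2)$. One then computes the jump of $v$ across $\Gamma_2$ and uses the first two equations of \eqref{eq12}, together with the identity $\mathcal{D}_\Gamma\mu(\mathbf{x}')=\mathcal{D}_{\Gamma'}\mu(\mathbf{x})$ for the image curve $\Gamma'$, to identify the Cauchy data of $v$ on $\partial\Omega_2$. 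An auxiliary radiating field $w=\mathcal{D}_\Gamma\mu+\mathcal{D}_{\Gamma'}\mu$ in $R_-^2\setminus\Omega_2$ then matches $v$ across $\Gamma_2$, and a second Green/Rellich argument in the \emph{lower} half-plane forces $v\equiv 0$ in $\Omega_2$, hence $\mu=0$ on $B\cup\Gamma$ from the jump \eqref{eq17}. This reflection-and-image step is the idea your proposal is missing.
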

\begin{proof}
Motivated by the analysis of \cite{HGA2},
we let
\begin{align}\label{eq15}
v= \left\{\begin{array}{l}
\mathcal{S}^{H}_{\Gamma_1}\sigma + \mathcal{D}^{H}_{\Gamma_1} \mu + \mathcal{D}_{B} \mu,  \mbox{ for } \mathbf{x}\in R^2\backslash (\Omega_1\cup \Omega_2) \\
\mathcal{S}^{H}_{\Gamma_1}\sigma+ \mathcal{D}^{H}_{\Gamma_1} \mu+\mathcal{D}_{B}\mu + \mathcal{D}_{\Gamma}\mu, \mbox{ for } \mathbf{x}\in \Omega_1\cup \Omega_2
\end{array} \right.
\end{align}
where $\Omega_2$ is the area enclosed by $\Gamma\cup B\cup \Gamma_2$. 

Combining \eqref{eq15} and eq. \eqref{eq12}, we have
\begin{equation}\label{eq13}
\quad v = 0 \mbox{ on } \Gamma^c, \quad \lim_{\substack{x\in \Omega_1 \\ x\rightarrow \Gamma\cup B}} v(x) = 0 \, ,
\end{equation}
and the jump conditions
\begin{align}\label{eq14}
\begin{split}
[v]|_{\Gamma_1} &= 0, \\
\bigg[\frac{\partial v}{\partial n}\bigg]|_{\Gamma_1} &= 0,
\end{split} 
\end{align}
where $[v]|_{\Gamma_1}$ denotes the jump of the function $v$ across the boundary $\Gamma_1$.

We first prove $v$ is identically zero in $R_+^2\cup \Omega_1$. For this, choose a sufficiently large 
half disk $D$ above the ground plane that contains the boundary $\Gamma_1$. For the area $D\backslash \Omega_1$, the boundary consists of $\Gamma_1$, part of the ground plane (still denoted by $\Gamma^c$) and a half circle, denoted by $\partial D$. Applying Green's theorem to $v$ in $D\backslash \Omega_1$ and $\Omega_1$, respectively, we obtain
\begin{equation} \label{green1}
\int_{\Omega_1} v\triangle\overline{v} + \nabla v\nabla\overline{v} dx = \int_{B\cup \Gamma} v\frac{\partial\overline{v}}{\partial n}ds + \int_{\Gamma_1} v\frac{\partial\overline{v}}{\partial n}ds
\end{equation}
and
\begin{equation} \label{green2}
\int_{D\backslash \Omega_1} v\triangle\overline{v} + \nabla v\nabla\overline{v} dx = \int_{\partial D} v\frac{\partial\overline{v}}{\partial n}ds - \int_{\Gamma_1} v\frac{\partial\overline{v}}{\partial n}ds+\int_{\Gamma^c} v\frac{\partial\overline{v}}{\partial n}ds \, .
\end{equation}
Adding eq. \eqref{green1} and eq. \eqref{green2}, together with \eqref{eq13} and \eqref{eq14},
yields
\begin{equation}
\int_{\partial D} v\frac{\partial\overline{v}}{\partial n}ds = \int_{D\backslash \Omega_1} (-k^2v\overline{v}+ \nabla v\nabla\overline{v}) dx + \int_{\Omega_1} (-k^2v\overline{v} + \nabla v\nabla\overline{v} )dx,
\end{equation}
which implies that
\begin{equation}
\text{Im}\bigg(\int_{\partial D} v\frac{\partial\overline{v}}{\partial n}ds\bigg) = 0 \, .
\end{equation}
It follows from Rellich's theorem \cite{Cot2}, applied to the half space, and the unique 
continuation property \cite{Cot}, that $v=0$ in $R_+^2\cup \Omega_1$.

We next show $v(x)$ satisfies the following equation in $\Omega_2$
\begin{align}\label{eq18}
\left\{\begin{array}{rl}
\Delta v + k^2 v &= 0,\mbox{ for } x \in \Omega_2\\
v &=\mathcal{D}_{\Gamma}\mu+\mathcal{D}_{\Gamma'}\mu,\ \mbox{ on } \Gamma_2\\
\frac{\partial v}{\partial n} &= \mathcal{T}_{\Gamma}\mu+\mathcal{T}_{\Gamma'}\mu,\ \mbox{ on } \Gamma_2\\
v &=\mu,\ \mbox{ on } B\cup\Gamma  \\
\frac{\partial v}{\partial n} & = 0, \ \mbox{ on } B\cup \Gamma \, ,
\end{array}\right.
\end{align}
where $\Gamma'$ is the image curve of $\Gamma$ with respect to the ground plane and 
$\mu(\mathbf{x})$ on $\Gamma'$ is the image source of $\mu(\mathbf{x}')$ on $\Gamma$. 

It follows from \eqref{eq15} that we have the following jump condition on $\Gamma_2$:
\begin{align}\label{eq16}
\begin{split}
[v]_{\Gamma_2} = \mu + \mathcal{D}_{\Gamma}\mu\\
\bigg[\frac{\partial v}{\partial n}\bigg]_{\Gamma_2} = -\sigma + \mathcal{T}_{\Gamma}\mu.
\end{split}
\end{align}
Since $v=0$ in $R_+^2\cup \Omega_1$, by symmetry, $v = 0$ in $R^2\backslash (\Omega_1\cup \Omega_2)$. 
From \eqref{eq12}, $\mu(\mathbf{x}) =\mu(\mathbf{x}')= \mathcal{D}_{\Gamma}\mu(\mathbf{x}')$ for $\mathbf{x}\in \Gamma_2$. However, by the definition of the image curve $\Gamma'$ and the source $\mu$ on $\Gamma'$, it is easy to see that $\mathcal{D}_{\Gamma}\mu(\mathbf{x}') =\mathcal{D}_{\Gamma'}\mu(\mathbf{x})$. Similarly, one can see that $\sigma(\mathbf{x}) = -\mathcal{T}_{\Gamma'}\mu(\mathbf{x})$ for $\mathbf{x}\in \Gamma_2$. Combining eqs \eqref{eq12} and \eqref{eq16} leads to
\begin{align}
\begin{split}
\lim_{\substack{\mathbf{x}\in \Omega_2 \\ \mathbf{x}\rightarrow \Gamma_2}} v &= \mathcal{D}_{\Gamma}\mu+\mathcal{D}_{\Gamma'}\mu \, , \\
\lim_{\substack{\mathbf{x}\in \Omega_2 \\  \mathbf{x}\rightarrow \Gamma_2}} \frac{\partial v}{\partial n} &= \mathcal{T}_{\Gamma}\mu+\mathcal{T}_{\Gamma'}\mu\, . 
\end{split}
\end{align}
We therefore obtain the first two boundary conditions for $v$ in \eqref{eq18}. 
The boundary conditions on $B\cup \Gamma$ are obtained from the jump condition
\begin{align}\label{eq17}
\begin{split}
[v]_{B\cup \Gamma} = \mu \, ,\\
\bigg[\frac{\partial v}{\partial n}\bigg]_{B\cup \Gamma} = 0\, ,
\end{split}
\end{align}
and the fact that $v$ is zero in $\Omega_1$.

We now prove that the solution to \eqref{eq18} is identically zero. Let
\begin{equation}
w(x) = \mathcal{D}_{\Gamma}\mu + \mathcal{D}_{\Gamma'}\mu,\quad \forall x \in R_-^2\backslash\Omega_2\, .
\end{equation}
It is easy to see $\frac{\partial w(x)}{\partial n}= 0$ on $\Gamma^c$ and
\begin{align}
w(x^+)-v(x^-) &= 0,\\
\frac{\partial w(x^+)}{\partial n}-\frac{\partial v(x^-)}{\partial n} &= 0,
\end{align}
where $w(x^+)$ and $v(x^-)$ denote the limiting function values as $x$ approaches 
$\Gamma_2$ from $R_-^2\backslash\Omega_2$ and $\Omega_2$, respectively.

As in the first part of the proof, we now choose a sufficiently large half disk, 
still denoted by $D$, in $R_-^2\backslash\Omega_2$ that contains $\Gamma_2$. 
Applying the first Green's theorem to $D\backslash \Omega_2$ and $\Omega_2$, eq. \eqref{eq18},
and the various jump conditions above lead to: 
\begin{equation}
\text{Im}\bigg(\int_{\partial \Omega_2}w\frac{\partial \overline{w}}{\partial n}\bigg) = 0 \, .
\end{equation}
Once again, from Rellich's theorem and unique continuation we may conclude that $v=0$ in $\Omega_2$, 
which implies that (a) $\mu = 0$ on $B\cup \Gamma$ from \eqref{eq18}, and 
(b) $\mu = 0$, $\sigma = 0$ on $\Gamma_1$ from \eqref{eq12}. This completes the proof.
\end{proof}

\begin{remark}
The cavity problem can also be solved through a variational formulation. 
The corresponding well-posedness was studied in \cite{HGA1,HGA3}.
\end{remark}

\section{Numerical discretization}
\label{section_numerical_discretization}
For our numerical simulations, we will make use of Nystr\"om discretization of the 
integral equation \eqref{eq11}. Since the kernels in the integral equation are logarithmically
singular, this requires some care. Fortunately, for smooth boundaries, there are by now a host
of simple, high-order rules available (see, for example,~\cite{Alpert1999,BGR2010,Helsing2008,klockner2013quadrature}). 
Here, we use composite Gaussian quadrature. Following the discussion of \cite{GHL2014}, we 
divide each smooth component of the boundary into $N$ curved panels with $p$ points in each panel. 
The $p$ points are chosen as scaled Gaussian-Legendre nodes, so that for smooth integrands
the order of accuracy is $(2p-1)$. More precisely, we replace the integral
\begin{equation}
\int_{\Gamma} K(\mathbf{x},\mathbf{y}) \sigma(\mathbf{y}) dS_{\mathbf{y}}
\end{equation} 
by the quadrature
\begin{equation}
\sum_{i = 1}^{N} \sum_{j=1}^{p} \mathcal{K}(\mathbf{x}_{l,m},\mathbf{y}_{i,j}) \sigma(\mathbf{y}_{i,j}) w_{l,m,i,j}
\end{equation}
where $\mathbf{x}_{l,m}$ is the $m$-th Gauss-Legendre node on panel $l$, $\mathbf{y}_{i,j}$ is the $j$-th Gauss-Legendre node on panel $i$, $w_{l,m,i,j}$ is the quadrature weight and $\mathcal{K}$ is the "quadrature kernel". For non-adjacent panels, we simply set
$\mathcal{K}(\mathbf{x}_{l,m},\mathbf{y}_{i,j})= K(\mathbf{x}_{l,m},\mathbf{y}_{i,j})$,
for node $\mathbf{x}_{l,m}$ on panel $l$ and node $\mathbf{y}_{i,j}$ on panel $j$. 
The weights $w_{l,m,i,j}$ in that case are the standard Gauss-Legendre weights at 
$\mathbf{y}_{i,j}$ scaled to the length of the $i$-th panel. 
For the self-interaction of a panel ($i=l$), or the interaction with an adjacent panel,
the quadrature kernel is computed by generalized Gaussian quadrature~\cite{BGR2010}. 
From a linear algebra point of view, the self and adjacent panel interactions correspond to the block 
tridiagonal entries in the matrix. In our direct solver, these elements are precomputed and stored. 
All other matrix entries are computed on the fly.

In the presence of corners, 
a graded mesh~\cite{Brem2012,Helsing2008} is used to maintain high order accuracy. 
More specifically, after uniform discretization by panels, we perform a dyadic refinement for the 
panels that impinges on each corner point. A $p$-th order generalized Gaussian quadrature is 
used on each of the refined panels. The formal error analysis for such discretization is 
rather involved, since it depends on the regularity of $\sigma$ and $\mu$. 
Readers are referred to~\cite{Brem2012,CHAN1983,Helsing2008} and the references therein for 
a discussion of the relevant analysis. Typically, 
if $\epsilon$ denotes the length of the finest panel in the refinement, then the error is 
proportional to $\mathcal{O}(e^{-p}+\epsilon)$. Since the mesh is dyadically refined, this requires
$\mathcal{O}(\log(1/\epsilon))$ additional nodes at each corner. 

With a total of $N$ nodes, solving
the linear system corresponding to the above discretization by conventional Gaussian
elimination requires $\mathcal{O}(N^3)$ work - an arduous task if the cavity has a complicated boundary
or if the wavenumber of the incoming field is high. If the wave number $k=1000$, for example,
and if the length of the boundary is $20$ (in normalized units), then 
the number of unknowns is around $100,000$, assuming $20$ points per wavelength and counting 
each complex unknown as two real unknowns. Conventional solvers would take a few days to 
factor this system on a single core machine operating at $2.6$GHz. 
The fast direct solver described in the next section takes around $10-15$ minutes.

\section{Fast direct solver}
\label{section_fast_direct_solver}
In this section, we briefly outline the fast direct solver we will use to solve the discretized
integral equation~\eqref{eq11}. 
It is a simple extension of the method described in~\cite{ambikasaran2013fast}, to which
we refer the reader for a more complete description of the method.
As noted in the introduction, there are two strong arguments
in favor of this strategy: 
\begin{itemize}
\item
The performance of the direct solver is insensitive to multiple reflections inside the cavity.
At high frequencies, this causes the problem to be 
{\em physically} ill-conditioned and causes severe degradation in the convergence of iterative
methods.
\item
The solver is particularly effective for multiple right hand sides. It proceeds in two 
steps: first, the  construction of a fast hierarchical factorization, and second the application
of the factored inverse to each new right-hand side, at much lower cost.
\end{itemize}

\subsection{Hierarchical off-diagonal low-rank matrices}
Discretizing the integral equation~\eqref{eq8} along the curve (a one-dimensional manifold)
and ordering the unknowns and equations sequentially, yields a linear system of the form $A\sigma = b$, 
where $A \in \mathbb{R}^{N \times N}$, $\sigma,b \in \mathbb{R}^{N}$. 
Because potential-theoretic interactions are smooth in the far field, the 
matrix $A$ has a hierarchical off-diagonal low-rank (HODLR) structure, 
as observed in~\cite{ambikasaran2013fast, amirhossein2014fast, kong2011adaptive}.
(Related formalisms that also permit fast solution can be found in 
\cite{chandrasekaran2006fast1,chandrasekaran2006fast,ho2012fast,martinsson2009fast,martinsson2005fast},
\cite{bebendorf2005hierarchical,borm2003hierarchical,borm2003introduction,hackbusch2001introduction,ambikasaran2013thesis}, 
and the references therein.)

For illustration, we note that
a $2$-level HODLR matrix can be written in the form shown in 
equation~(\ref{equation_HODLR_2_level}).
\begin{align}
A & =
\begin{bmatrix}
A_1^{(1)} & U_{1}^{(1)} K_{1,2}^{(1)}V_{2}^{(1)^T}\\
U_{2}^{(1)} K_{2,1}^{(1)} V_{1}^{(1)^T} & A_2^{(1)}
\end{bmatrix}\\
& =
\begin{bmatrix}
\begin{bmatrix}
A_{1}^{(2)} & U_{1}^{(2)} K_{1,2}^{(2)} V_{2}^{(2)^T}\\
U_{2}^{(2)} K_{2,1}^{(2)} V_{1}^{(2)^T} & A_{2}^{(2)}
\end{bmatrix}
&
U_{1}^{(1)} K_{1,2}^{(1)} V_{2}^{(1)^T}\\
U_{2}^{(1)} K_{2,1}^{(1)} V_{1}^{(1)^T}&
\begin{bmatrix}
A_{3}^{(2)} & U_{3}^{(2)} K_{3,4}^{(2)} V_{4}^{(2)^T}\\
U_{4}^{(2)} K_{4,3}^{(2)} V_{3}^{(2)^T} & A_{4}^{(2)}
\end{bmatrix}
\end{bmatrix}
\label{equation_HODLR_2_level}
\end{align}
Each off-diagonal block is of low-rank (i.e., the rank of these blocks does not grow with the 
system size), although these ranks can all be different.
In general, for a $\kappa$-level HODLR matrix $A$, the $i^{th}$ diagonal block at level $k$, where $1 \leq i \leq 2^{k}$ and $0 \leq k < \kappa$, denoted by $A_{i}^{(k)}$, can be written as
\begin{align}
A_{i}^{(k)} =
\begin{bmatrix}
A_{2i-1}^{(k+1)} & U_{2i-1}^{(k+1)} K_{2i-1,2i}^{(k+1)}V_{2i}^{(k+1)^T}\\
U_{2i}^{(k+1)} K_{2i, 2i-1}^{(k+1)} V_{2i-1}^{(k+1)^T} & A_{2i}^{(k+1)}
\end{bmatrix} \, ,
\label{equation_HODLR_level_k}
\end{align}
where $A_{i}^{(k)} \in \mathbb{R}^{N/2^k \times N /2^k}$, and 
$U_{2i-1}^{(k)}, U_{2i}^{(k)}, V_{2i-1}^{(k)}, V_{2i}^{(k)}$ are thin matrices with $\dfrac{N}{2^k}$ 
rows. A pictorial representation of the matrix $A$ is shown in Figure~\ref{figure_HODLR_matrix}.
\begin{figure}[!htbp]
\includegraphics[scale=1]{./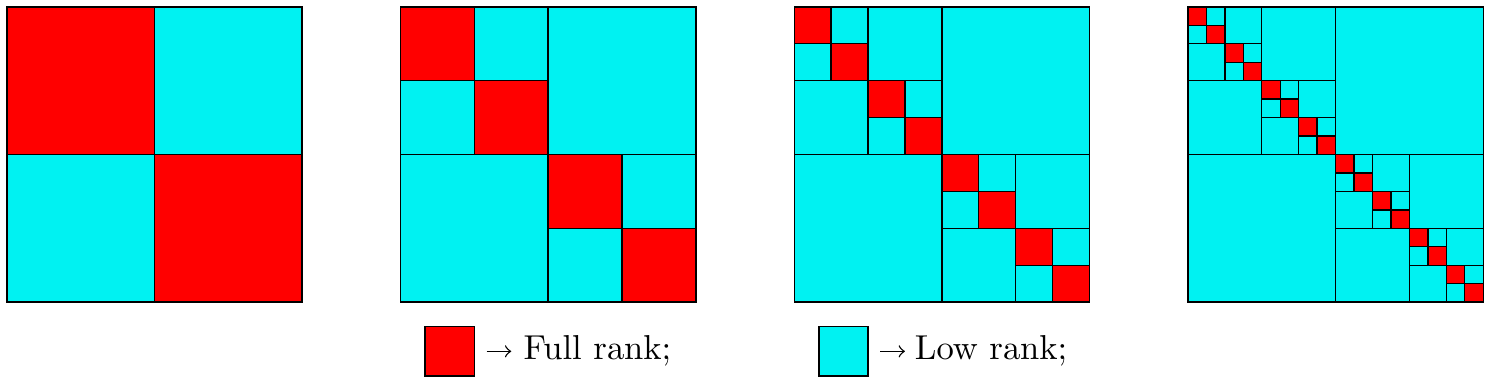}
\caption{A hierarchical off-diagonal low-rank matrix at different levels.}
\label{figure_HODLR_matrix}
\end{figure}

For the solver developed here, the low-rank decomposition of the off-diagonal blocks is obtained 
using the adaptive cross approximation (ACA)~\cite{rjasanow2002adaptive, zhao2005adaptive} algorithm, 
which is a modification of the partially pivoted LU algorithm. The advantage of ACA is that the 
computational cost of obtaining a low-rank factorization of a low-rank $M \times N$ matrix is 
$\mathcal{O}_{\epsilon}(M+N)$. The technique is 
based entirely on numerical linear algebra, so the low-rank construction of the off-diagonal blocks is 
independent of the underlying integral operator. Once the low-rank decomposition of 
the off-diagonal blocks at all levels is obtained, the factorization of the matrix $A$ 
proceeds along the lines described in~\cite{ambikasaran2013fast}. In other words, 
the matrix $A$ is factored as shown in eq. ~\eqref{equation_HODLR_factorization}.
\begin{align}
A = A_{\kappa} A_{\kappa-1} \cdots A_1 A_0 \, ,
\label{equation_HODLR_factorization}
\end{align}
where the $A_i$'s are block diagonal matrices with $2^i$ diagonal blocks and each block is a 
low-rank perturbation of the identity matrix. The factorization can be obtained at a computational 
cost of the order $\mathcal{O}(N \log^2 N)$ by recursive application of the 
Sherman-Morrison-Woodbury formula. 
A pictorial representation of the above factorization for a level $3$ HODLR matrix is shown in 
Figure~\ref{figure_HODLR_factorization}.
\begin{figure}[!htbp]
\includegraphics[scale=0.925]{./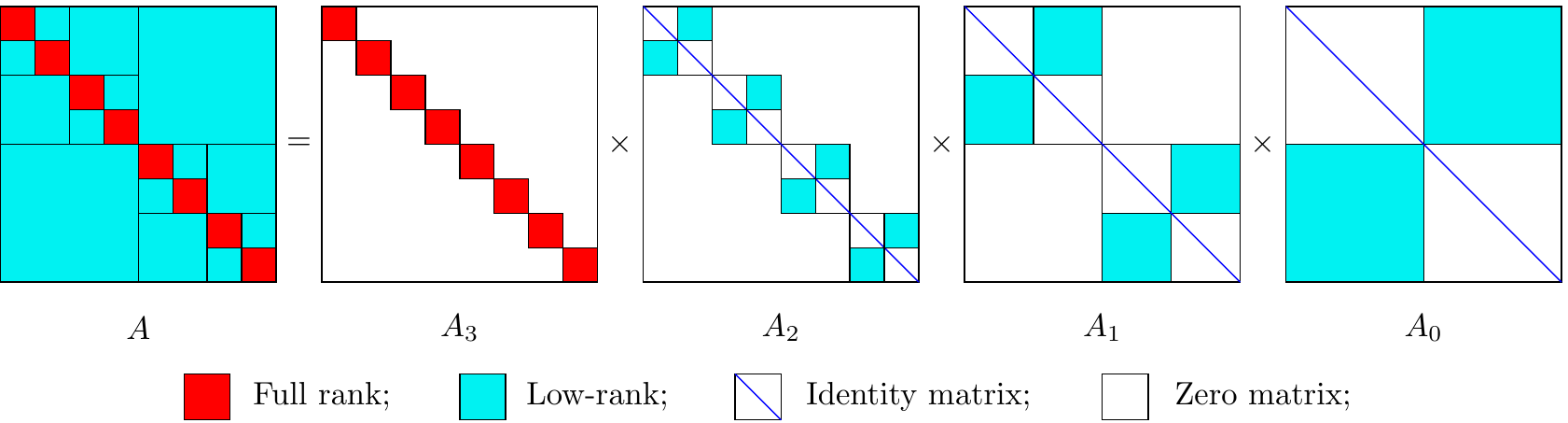}
\caption{Factorization of a HODLR matrix at level $3$.}
\label{figure_HODLR_factorization}
\end{figure}
Other fast methods, such as the interpolative decomposition~\cite{liberty2007randomized} can be used
in place of ACA. Further details can be found in~\cite{ambikasaran2013fast} and an
implementation is available from~\cite{HODLRSolver}.
 
As with any direct factorization, a principal advantage is the performance of the scheme
with multiple right hand sides. That is, the cost of solving the linear system with `$r$' 
right-hand sides scales as $\mathcal{O}(r N \log N)$, with a smaller constant than for the factorization 
step. This is very attractive in the present context, since it permits the
computation of the scattered field for multiple incident angles in a negligible amount of time.
Each problem corresponds to a new right-hand side in the integral equation.


\section{Numerical results}
\label{section_numerical_results}
In this section, we illustrate the performance of our algorithm on three different cavity shapes. 
In general, there is no exact solution for the field scattering from a cavity embedded in a
ground plane. We may, however, validate our solver by using ``artificial" boundary data generated 
by a collection of point sources, in which case the potential field is known and can be used 
for comparison with the potential generated by the integral representation on the boundary. 
More specifically, we define the following field in the perturbed upper half-space
$R_+^2\cup \Omega_1$:
\begin{equation}
u(\mathbf{x}) = \frac{i}{4}H^{(1)}_0(k|(\mathbf{x}-\mathbf{x_0})|)+\frac{i}{4}H^{(1)}_0(k|(\mathbf{x}-\mathbf{x'_0})|).
\end{equation}
We then solve the following system of equations
\begin{equation}
\left\{\begin{array}{rl}
-\mu + \mathcal{D}_{\Gamma}\mu &= u(\mathbf{x}), \quad \mbox{ for } \mathbf{x} \in \Gamma_1\\
\sigma + \mathcal{T}_{\Gamma}\mu &= \frac{\partial u(\mathbf{x})}{\partial n},\quad \mbox{ for } \mathbf{x} \in \Gamma_1\\
-\frac{1}{2}\mu+\mathcal{S}^{H}_{\Gamma_1}\sigma+ \mathcal{D}^{H}_{\Gamma_1} \mu+\mathcal{D}_{B}\mu + \mathcal{D}_{\Gamma}\mu &= u(\mathbf{x}), \quad \mbox{ for } \mathbf{x}\in B\cup \Gamma
\end{array}\right.
\end{equation}
where $\mathbf{x_0}=(5,12)$, $\mathbf{x'_0}=(5,-12)$ and the center of the cavity is at $(0.5,0)$.
It is straightforward to see that the 
field given by the representation \eqref{eq10} should be equal to $u(x)$ in $\Omega_1$ and zero in $R_+^2\backslash \Omega_1$.

\vspace{.2in}

\hrule
When presenting the numerical results, we use the following notation:
\begin{itemize}
\item $N_{mid}$: Number of equal-sized curve segments used to discretize each smooth component of the 
piecewise-smooth boundary.
\item $N_{corner}$: Number of panels used to dyadically refine the first and last panel on each smooth
component.  
\item $N_{tot}$: Total number of unknowns in the system
\item $T_{factor}$: Amount of time in second to factorize the system
\item $T_{solve}$: Amount of time in second required to solve the system after factorization
\item $E_{error}$: The average relative error at some random points in $\Omega_1$.
\end{itemize}
\hrule

\vspace{.2in}

In the examples below, when solving the true scattering problem, the incoming field is
assumed to be a plane wave and we compute the backscatter radar cross section (RCS) for each
cavity.  The backscatter RCS is defined as the intensity of the far field pattern in the same
direction as the incident angle. When the boundary of the cavity is strictly below the ground plane, 
it can be shown the backscatter RCS is given by~\cite{JIN3}:
\begin{equation}
RCS = \frac{4}{k}\bigg|\frac{k}{2}\sin\theta \int_{\Gamma} e^{ik\cos\theta x}dx\bigg|^2,
\end{equation} 
where $\theta$ is the angle of incidence and $\Gamma$ is the aperture of the cavity. 
If part of the boundary is above the ground plane, the RCS can be found from $\eqref{eq10}$ 
and the asymptotic behavior of the Hankel function.

We choose the artificial boundary to be a half circle centered at $(0.5,0)$, 
the center of the aperture, with a radius of $2.5$. 
For numerical stability, we use inner-product-preserving Nystr\"om scaling, as
proposed in~\cite{Brem2012}. That is, the discrete unknowns are taken to be the physical unknowns
multiplied by the square root of the corresponding quadrature weight. 
All numerical tests have been carried out on a laptop with $4$Gb memory and $2.6$GHz Intel CPU.

\subsection{Example $1$: Pot shaped cavity}
In our first example, we consider the pot shaped cavity shown in Figure~\ref{fig11_pot_shaped_cavity}. 
The width of the aperture is $1$, below which is a circle of radius $\sqrt{2}/2$ 
centered at $(0.5,-1)$. 
The accuracy of the solution for wavenumbers ranging from $k=1$ to $k=800$ are 
shown in Table \ref{table_1}.
 
With an order of accuracy $p=10$,
we discretize the boundary with roughly $50$ points per wavelength to achieve an accuracy of $10^{-9}$. 
Table~\ref{table_1} shows that the required time scales approximately as 
$\mathcal{O}(k N_{tot})$.
Note that accuracy is lost more or less linearly with increasing wavenumber.
The results support our observation above that the fast direct solver is 
particularly efficient at computing the RCS for
multiple angles of incidence, since the cost for each new right-hand side is very modest.
In Figure~\ref{fig11_pot_shaped_cavity}, we plot the scattered field for a normally incident plane wave 
at $k=160$, as well as the backscatter RCS for angles ranging from $0$ to $\pi$, sampled at
$360$ equispaced steps.
The time taken for the factorization is less than a minute and the total time taken to solve for 
all incident angles takes around $30$ seconds. 
\begin{table*}[!htbp]
\begin{center}
\caption{Results for the pot shaped cavity over a range of wavenumbers}
\label{table_1}
\begin{tabular}{|c|c|c|c|c|c|c|}
\toprule
Wavenumber $k$ & $N_{corner}$ & $N_{mid}$ & $N_{tot}$ & $T_{factor}$ & $T_{solve}$ & $E_{error}$ 
\\
\midrule
$1$ & 10  & 2 & $1320$  & $0.5$ & 0.01 & $7.6 \cdot 10^{-10}$ \\
$10$ & 10 & 10 & $2200$ & $0.65$ & 0.01 & $5 \cdot 10^{-8}$ \\
$20$ & 10 & 20 & $3300$ & $1.52$ & 0.01 &$1.2 \cdot 10^{-9}$ \\
$40$ & 10 & 40 & $5500$ & $3.14$ & 0.02 & $8.5 \cdot 10^{-9}$ \\
$80$ & 10 & 80 & $9900$ & $9.5$ & 0.05 & $4.1 \cdot 10^{-9}$ \\
$160$ & 10 & 160 & $18700$ & $42.4$& 0.19 &$1.2 \cdot 10^{-9}$ \\
$320$ & 10 & 320 & $36300$ & $192.8$ & 0.57 & $2.1 \cdot 10^{-8}$ \\
$640$ & 10 & 400 & $45100$ & $581.4$ & 1.43 & $5.5 \cdot 10^{-5}$ \\
$800$ & 10 & 400 & $45100$ & $785.1$ & 1.57 & $4.6 \cdot 10^{-6}$ \\
\bottomrule
\end{tabular}
\end{center}
\end{table*}
 
\begin{figure}[!htbp]
\begin{center}
\subfloat[]{} \includegraphics[scale=0.4]{./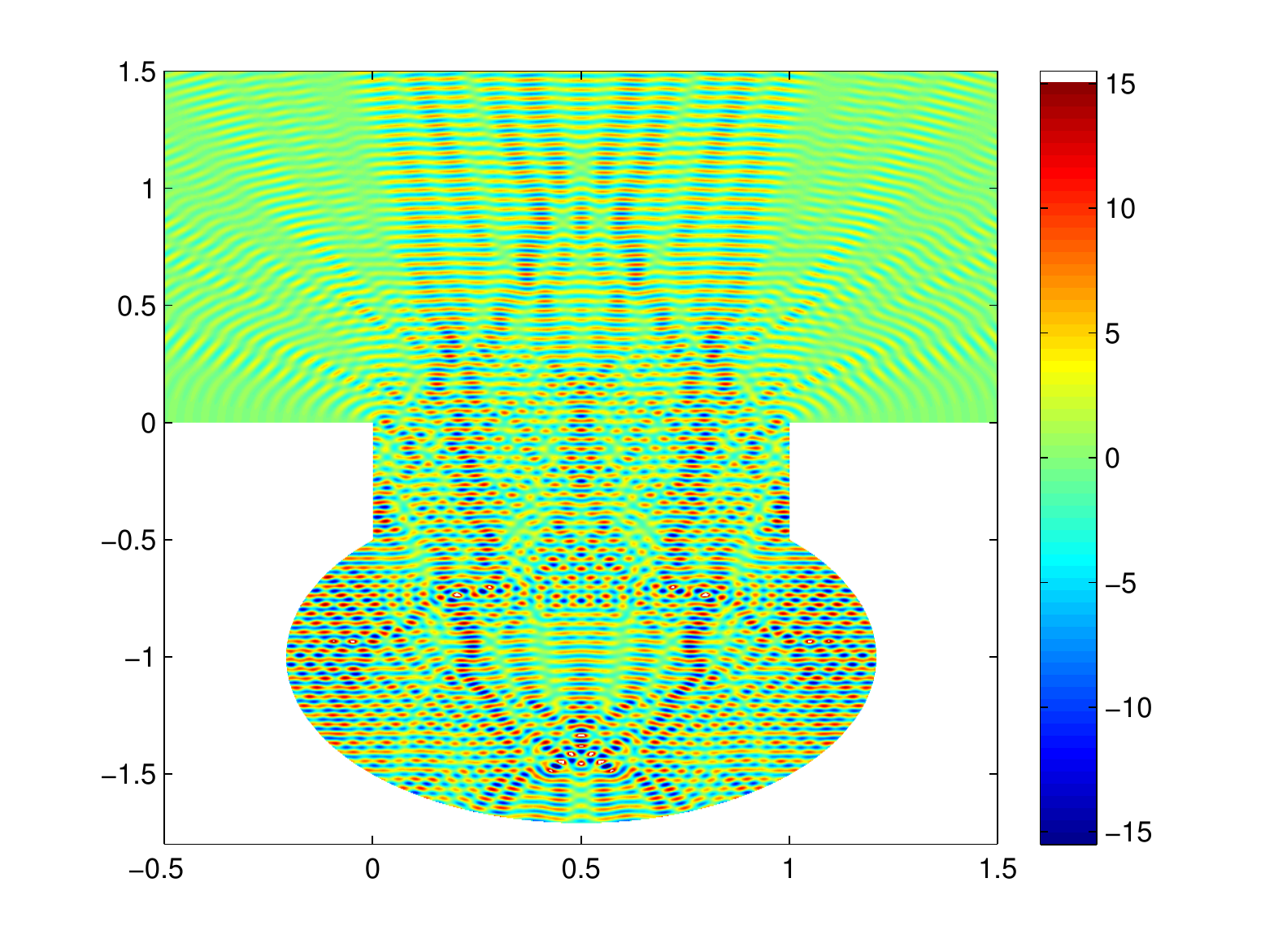}
\subfloat[]{} \includegraphics[scale=1]{./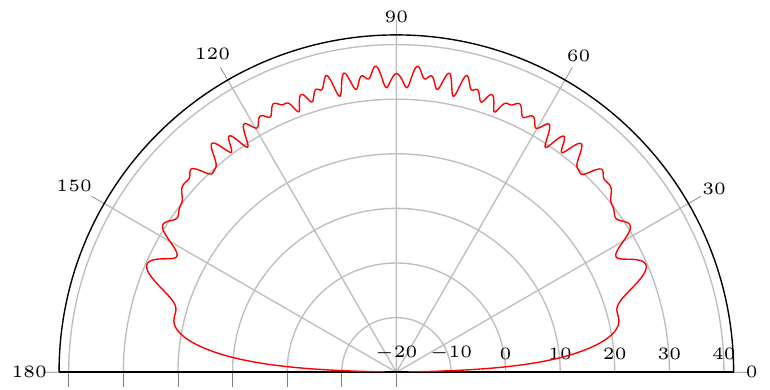}
\end{center}
\caption{{ \emph{Example 1. (a) Real part of the scattered field for a pot shaped cavity with
a normally incident plane wave at wavenumber k=160.  (b) The backscatter RCS in $dB$ for the pot shaped cavity at k=160}}}
\label{fig11_pot_shaped_cavity}
\end{figure}

\FloatBarrier

\subsection{Example 2: Engine shaped cavity} 

We next investigate scattering by an engine-shaped cavity. 
The cavity is constructed by connecting the points
$[0,0]$, $[0,-2]$, $[0.45,-2.0]$, $[0.45,-1.6]$, $[0.1,-1.6]$, $[0.1,-1.0]$, $[0.45,-1.0]$, 
$[0.45,-0.4]$, $[0.5,0.2]$, $[0.55,-0.4]$, $[0.55,-1.0]$, $[0.9,-1.0]$, $[0.9,-1.6]$, $[0.55,-1.6]$, 
$[0.55,-2.0]$, $[1,-2]$ and $[1,0]$, as shown in Figure \ref{fig11_engine_shaped_cavity}. 
Note that the tip of the engine is above the ground plane. 

Computational results are shown in Table~\ref{table_2}. Compared with Example 1, 
the required solution time is greater for the same wavenumber and approximately the same number of 
points. This is due to the fact that some segments of the boundary are physically close to each other, 
which increases the rank of some off-diagonal blocks in the HODLR matrix. 
The scattered field for a plane wave at angle of incidence $45^o$ with $k=160$ is plotted 
in Figure~\ref{fig11_engine_shaped_cavity}. 
The RCS is also shown in Figure~\ref{fig11_engine_shaped_cavity}. An animation, Movie 1, of the scattered field as the angle of incidence varies is provided in the supplementary material. Opening the animation using Adobe Reader XI, Version 11.0.06 is recommended.

\begin{table*}[!htbp]
\begin{center}
\caption{Results for engine-shaped cavity over a range of wavenumbers}
\label{table_2}
\begin{tabular}{|c|c|c|c|c|c|c|}
\toprule
Wavenumber $k$ & $N_{corner}$ & $N_{mid}$ & $N_{tot}$ & $T_{factor}$ & $T_{solve}$ & $E_{error}$  \\
\midrule
$1$ & 10  & 2 & $3360$  & $3.1$ & 0.01 & $8.8 \cdot 10^{-10}$\\
$10$ & 10 & 10 & $5600$ & $5.46$ & 0.03 & $7.7 \cdot 10^{-12}$\\
$20$ & 10 & 20 & $8400$ & $8.73$ & 0.04 &$5.0 \cdot 10^{-11}$  \\
$40$ & 10 & 40 & $14000$ & $17.83$ & 0.1 & $5.9 \cdot 10^{-9}$ \\
$80$ & 10 & 80 & $25200$ & $44.16$ & 0.26 & $7.8 \cdot 10^{-8}$ \\
$160$ & 10 & 160 & $47600$ & $155.2$& 0.58 &$4.2 \cdot 10^{-8}$ \\
$320$ & 10 & 200 & $58800$ & $466.3$ & 1.27 & $3.3 \cdot 10^{-8}$\\
$640$ & 10 & 200 & $58800$ & $980.4$ & 2.50 & $3.5 \cdot 10^{-8}$\\
$800$ & 10 & 200 & $58800$ & $1100.1$ & 3.01 & $9.6 \cdot 10^{-8}$\\
\bottomrule
\end{tabular}
\end{center}
\end{table*}

\begin{figure}[!htbp]
\begin{center}
\subfloat[]{} \includegraphics[scale=0.4]{./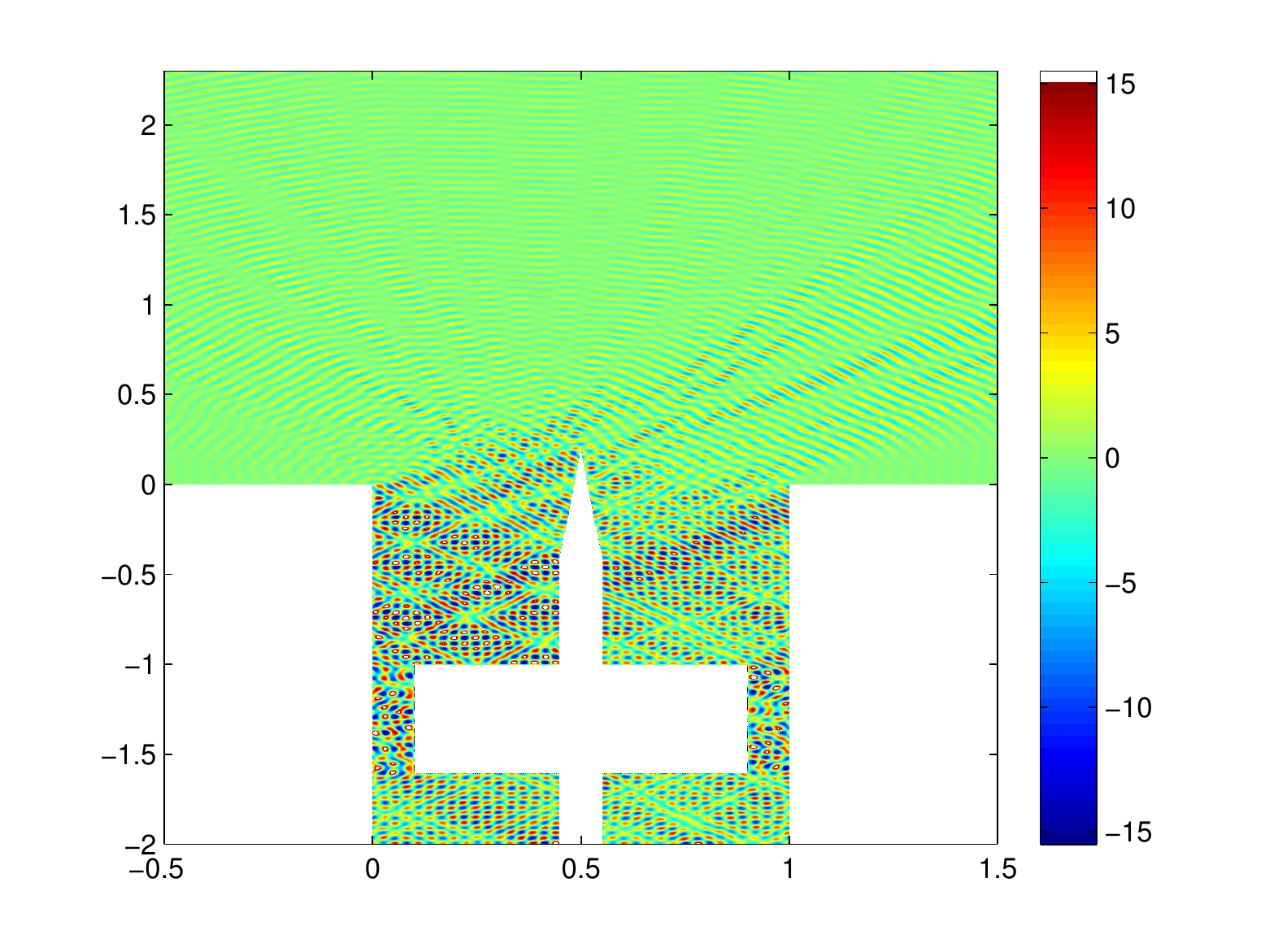}
\subfloat[]{} \includegraphics[scale=1]{./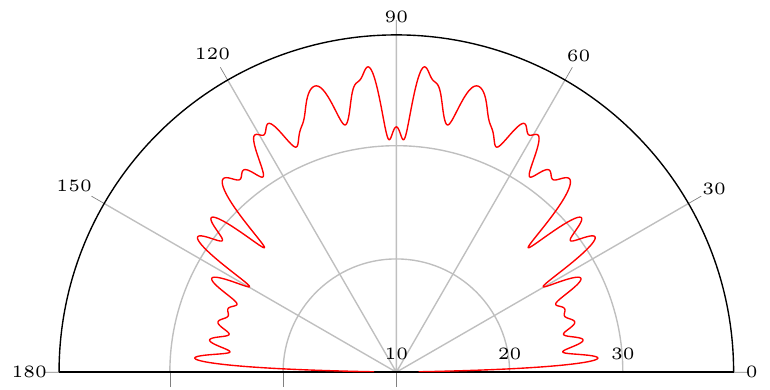}
\end{center}
\caption{{\emph{Example 2: (a) Real part of the scattered field for an engine-shaped cavity 
in response to an incident plane wave at angle $45^o$ with wavenumber $k=160$,  
(b) The backscatter RCS in $dB$ for the engine shaped cavity at $k=160$.}}}
\label{fig11_engine_shaped_cavity}
\end{figure}

\FloatBarrier

\subsection{Example 3: Rough bottom cavity}
In our last example, we consider a cavity with a rough-shaped boundary, which is extremely difficult 
to analyze by standard finite difference or finite element methods, especially at large wavenumber. 
The boundary of the cavity is parametrization by:

\begin{equation}
\left\{\begin{array}{l}
x(\theta) =r(\theta)\cos(\theta)+0.5 \\
y(\theta) =1.25r(\theta)\sin(\theta) 
\end{array}\right.
\end{equation}
where 
\begin{equation}
r(\theta) = 1+0.1\sin(2\theta)+0.1\sin(11\theta)+
0.08\sin(19\theta)\sin(29\theta)+0.05\sin(47\theta)
\end{equation}
and $\theta \in [\pi,2\pi]$.
Such rough structures may appear in modeling manufacturing defects or during intermediate steps
of an optimization procedure aimed at reconstructing an unknown cavity by solving the 
inverse scattering problem. The boundary is discretized using three segments. 
The results in Table~\ref{table_3} again show that the work is of the order
$\mathcal{O}(k N_{tot})$.
Figure~\ref{fig11_rough_bottom_cavity} shows the scattered field for $k=160$ with a plane
wave at normal incidence, as well as the RCS plot. 
Note that the rough bottom exhibits a much larger RCS between 
$\pi/4$ and $3\pi/4$ than in the previous examples. 

\begin{table*}[!htbp]
\begin{center}
\caption{Results for rough-bottom cavity over a range of wavenumbers}
\label{table_3}
\begin{tabular}{|c|c|c|c|c|c|c|}
\toprule
Wavenumber $k$ & $N_{corner}$ & $N_{mid}$ & $N_{tot}$ & $T_{factor}$ & $T_{solve}$ & $E_{error}$  \\
\midrule
$1$ & 10  & 20 & $3300$  & $2.3$ & 0.01 & $1.3 \cdot 10^{-8}$\\
$10$ & 10 & 20 & $3300$ & $2.1$ & 0.02 & $6.6 \cdot 10^{-9}$\\
$20$ & 10 & 40 & $5500$ & $3.2$ & 0.02 &$5.3 \cdot 10^{-8}$  \\
$40$ & 10 & 80 & $9900$ & $6.5$ & 0.05 & $4.8 \cdot 10^{-8}$ \\
$80$ & 10 & 160 & $18700$ & $20.8$ & 0.12 & $1.2 \cdot 10^{-8}$ \\
$160$ & 10 & 320 & $36300$ & $78.7$& 0.35 &$5.7 \cdot 10^{-8}$ \\
$320$ & 10 & 400 & $45100$ & $247.3$ & 0.73 & $2.9 \cdot 10^{-8}$\\
$640$ & 10 & 400 & $45100$ & $772.4$ & 1.38 & $4.2 \cdot 10^{-7}$\\
\bottomrule
\end{tabular}
\end{center}
\end{table*}

\begin{figure}[!htbp]
\begin{center}
\subfloat[]{} \includegraphics[scale=0.4]{./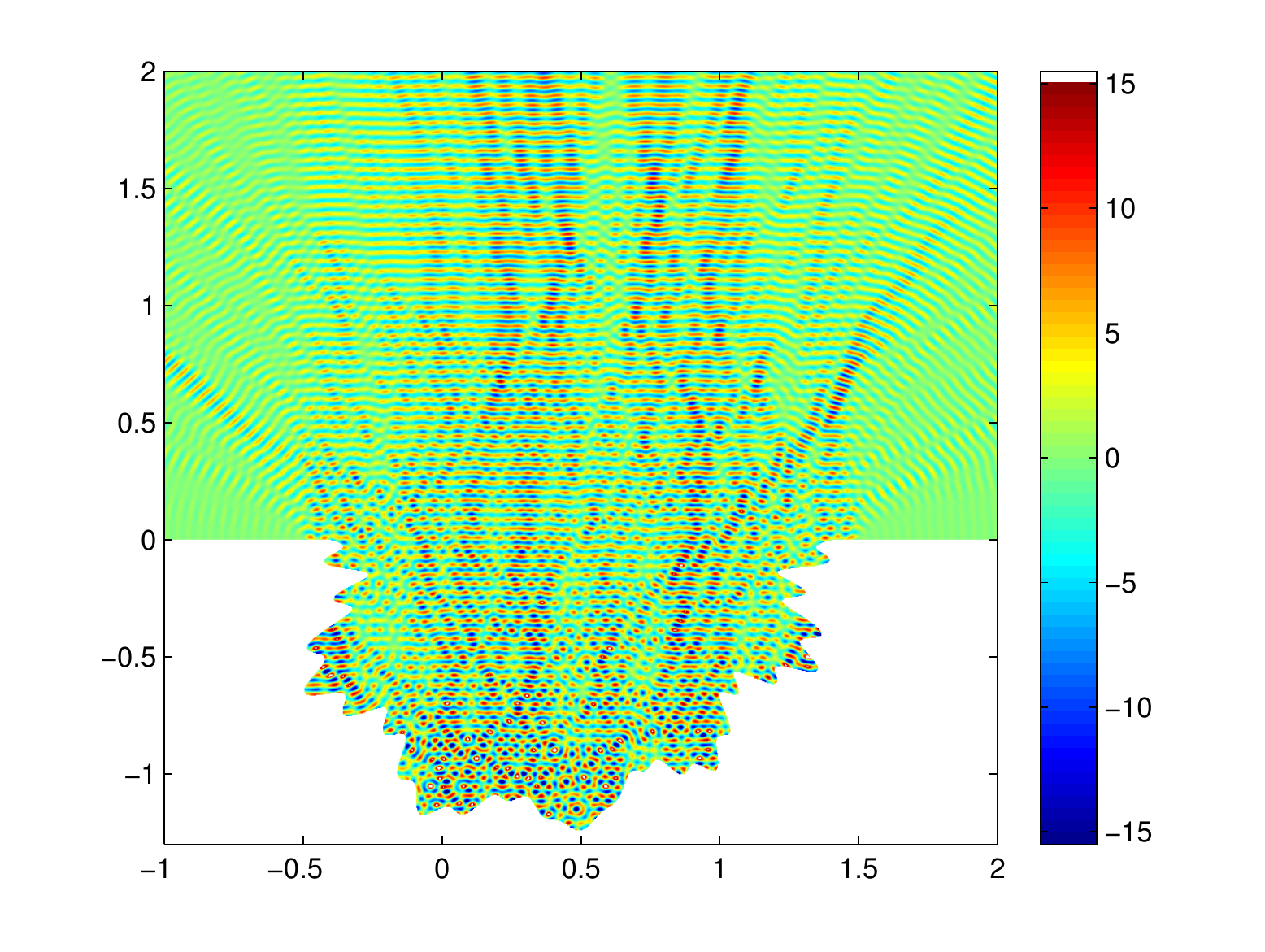}
\subfloat[]{} \includegraphics[scale=1]{./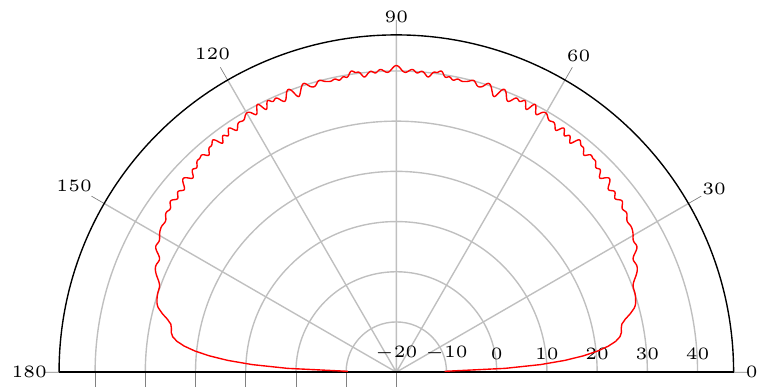}
\end{center}
\caption{{ \emph{Example 3. (a) Real part of the scattered field for a rough bottom cavity with a 
normally incident plane wave at wavenumber k=160.  (b) The backscatter RCS in $dB$ for the rough bottom cavity at k=160.}}}
\label{fig11_rough_bottom_cavity}
\end{figure}

\FloatBarrier


\section{Conclusion}
\label{section_conclusion}
We have presented a new integral formulation for high frequency electromagnetic scattering from a 
large cavity that leads to a second kind integral equation and which is compatible with 
a fast and accurate direct solver. The main novelty is the use of a global density, 
which is non-physical in the sense that the field in certain domains 
is determined by layer potential densities that are not necessarily on the 
boundary of the domain itself. We have proven well-posedness of the formulation, which implies that
the method does not suffer from any spurious resonances. The equation is discretized by 
high order quadrature and numerical experiments show that the direct solver is very effective even 
for large frequencies and arbitrarily-shaped cavities. Future work includes extending the 
formulation to an impedance boundary, to the full Maxwell equations in three dimensions, and 
to problems of optimal design.

\section{Acknowledgements}
This work was supported by the Applied Mathematical Sciences Program of the U.S. Department of Energy under Contract DEFGO288ER25053 and by the Office of the Assistant Secretary of Defense for Research and Engineering and AFOSR under NSSEFF Program Award FA9550-10-1-0180.

\begin{thebibliography}{10}

\bibitem{Alpert1999}
B.~K. Alpert.
\newblock Hybrid {G}auss-trapezoidal quadrature rules.
\newblock {\em SIAM J. Sci. Comput.}, 20:1551--1584, 1999.

\bibitem{ambikasaran2013thesis}
S.~Ambikasaran.
\newblock {\em Fast Algorithms for Dense Numerical Linear Algebra and
  Applications}.
\newblock PhD thesis, Stanford University, 2013.

\bibitem{HODLRSolver}
S.~{A}mbikasaran.
\newblock A fast direct solver for dense linear systems.
\newblock {https://github.com/sivaramambikasaran/HODLR\textunderscore Solver},
  2013.

\bibitem{ambikasaran2013fast}
S.~Ambikasaran and E.~Darve.
\newblock An $\mathcal{O}({N} \log {N})$ fast direct solver for partial
  hierarchically semi-separable matrices.
\newblock {\em Journal of Scientific Computing}, pages 1--25, 2013.

\bibitem{amirhossein2014fast}
A.~Aminfar, S.~Ambikasaran, and E.~Darve.
\newblock A fast block low-rank dense solver with applications to
  finite-element matrices.
\newblock {\em arXiv preprint arXiv:1403.5337 [cs-NA]}, 2014.

\bibitem{HGA2}
H.~Ammari, G.~Bao, and A.~W. Wood.
\newblock An integral equation method for the electromagnetic scattering from
  cavities.
\newblock {\em Math. Meth. Appl. Sci.}, 23:1057--1072, 2000.

\bibitem{HGA1}
H.~Ammari, G.~Bao, and A.~W. Wood.
\newblock Analysis of the electromagnetic scattering from a cavity.
\newblock {\em Japan J. Indust. Appl. Math.}, 19(2):301--310, 2002.

\bibitem{HGA3}
H.~Ammari, G.~Bao, and A.~W. Wood.
\newblock A cavity problem for {M}axwell's equation.
\newblock {\em Meth. Appl. Anal.}, 9(2):249--260, 2002.

\bibitem{arnoldi1951principle}
W.~Arnoldi.
\newblock The principle of minimized iterations in the solution of the matrix
  eigenvalue problem.
\newblock {\em Quart. Appl. Math}, 9(1):17--29, 1951.

\bibitem{AK1994}
J.~Asvestas and R.~Kleinman.
\newblock Electromagnetic scattering by indented screens.
\newblock {\em Antennas and Propagation, IEEE Transactions on}, 42(1):22--30,
  Jan 1994.

\bibitem{GLin}
G.~Bao, J.~Gao, J.~Lin, and W.~Zhang.
\newblock Mode matching for the electromagnetic scattering from
  three-dimensional large cavities.
\newblock {\em IEEE Antennas Wireless Propagat.}, 60:2004--2010, 2012.

\bibitem{BJL2}
G.~Bao and J.~Lai.
\newblock Optimal shape design of a cavity for radar cross section reduction.
\newblock {\em SIAM J. Control Optim. submitted}, 2014.

\bibitem{BJL1}
G.~Bao and J.~Lai.
\newblock Radar cross section reduction of a cavity in the ground plane.
\newblock {\em Commun. Comput. Phys.}, 15:895--910, 2014.

\bibitem{GW1}
G.~Bao and W.~Sun.
\newblock A fast algorithm for the electromagnetic scattering from a large
  cavity.
\newblock {\em SIAM J. Sci. Comput.}, 27:553--574, 2005.

\bibitem{GKZ}
G.~Bao, K.~Yun, and Z.~Zhou.
\newblock Stability of the scattering from a large electromagnetic cavity in
  two dimensions.
\newblock {\em SIAM J. Math. Anal.}, 44(1):383--404, 2012.

\bibitem{GWZ}
G.~Bao and W.~Zhang.
\newblock An improved mode-matching method for large cavities.
\newblock {\em IEEE Antennas Wireless Propagat. Lett.}, 27:393--396, 2005.

\bibitem{barnes1986hierarchical}
J.~Barnes and P.~Hut.
\newblock A hierarchical $\mathcal{O}({N} \log {N})$ force-calculation
  algorithm.
\newblock {\em Nature}, 324(4):446--449, 1986.

\bibitem{bebendorf2005hierarchical}
M.~Bebendorf.
\newblock Hierarchical {LU} decomposition-based preconditioners for {BEM}.
\newblock {\em Computing}, 74(3):225--247, 2005.

\bibitem{borm2003hierarchical}
S.~B{\"o}rm, L.~Grasedyck, and W.~Hackbusch.
\newblock Hierarchical matrices.
\newblock {\em Lecture notes}, 21, 2003.

\bibitem{borm2003introduction}
S.~B{\"o}rm, L.~Grasedyck, and W.~Hackbusch.
\newblock Introduction to hierarchical matrices with applications.
\newblock {\em Engineering Analysis with Boundary Elements}, 27(5):405--422,
  2003.

\bibitem{Brem2012}
J.~Bremer.
\newblock On the {N}ystr\"om discretization of integral equations on planar
  curves with corners.
\newblock {\em Appl. Comput. Harm. Anal.}, 32:45--64, 2012.

\bibitem{BGR2010}
J.~Bremer, Z.~Gimbutas, and V.~Rokhlin.
\newblock A nonlinear optimization procedure for generalized {G}aussian
  quadratures.
\newblock {\em SIAM J. Sci. Comput.}, 32(4):1761--1788, June 2010.

\bibitem{CHAN1983}
G.~A. Chandler.
\newblock {G}alerkin's method for boundary integral equations on polygonal
  domains.
\newblock {\em J. Austral. Math. Soc. Ser. B}, 26:1--13, 1984.

\bibitem{Chandle05}
S.~N. Chandler-wilde and A.~T. Peplow.
\newblock A boundary integral equation formulation for the {H}elmholtz equation
  in a locally perturbed half-plane.
\newblock {\em ZAMM Z. Angew. Math. Mech}, pages 79--88, 2005.

\bibitem{chandrasekaran2006fast1}
S.~Chandrasekaran, P.~Dewilde, M.~Gu, W.~Lyons, and T.~Pals.
\newblock A fast solver for {HSS} representations via sparse matrices.
\newblock {\em SIAM Journal on Matrix Analysis and Applications}, 29(1):67--81,
  2006.

\bibitem{chandrasekaran2006fast}
S.~Chandrasekaran, M.~Gu, and T.~Pals.
\newblock A fast {ULV} decomposition solver for hierarchically semiseparable
  representations.
\newblock {\em SIAM Journal on Matrix Analysis and Applications},
  28(3):603--622, 2006.

\bibitem{cheng1999fast}
H.~Cheng, L.~Greengard, and V.~Rokhlin.
\newblock A fast adaptive multipole algorithm in three dimensions.
\newblock {\em Journal of {C}omputational {P}hysics}, 155(2):468--498, 1999.

\bibitem{coifman1993fast}
R.~Coifman, V.~Rokhlin, and S.~Wandzura.
\newblock The fast multipole method for the wave equation: A pedestrian
  prescription.
\newblock {\em Antennas and Propagation Magazine, IEEE}, 35(3):7--12, 1993.

\bibitem{Cot2}
D.~Colton and R.~Kress.
\newblock {\em Integral Equation Method in Scattering Theory}.
\newblock Wiley-Interscience, New York, 1983.

\bibitem{Cot}
D.~Colton and R.~Kress.
\newblock {\em Inverse Acoustic and Electromagnetic Scattering Theory, Applied
  Mathematical Sciences 93}.
\newblock Springer-Verlag, Berlin, 1998.

\bibitem{cooley1965algorithm}
J.~Cooley and J.~Tukey.
\newblock An algorithm for the machine calculation of complex {F}ourier series.
\newblock {\em Math. Comput}, 19(90):297--301, 1965.

\bibitem{freund1993transpose}
R.~Freund.
\newblock A transpose-free quasi-minimal residual algorithm for non-{H}ermitian
  linear systems.
\newblock {\em SIAM Journal on Scientific Computing}, 14:470, 1993.

\bibitem{freund1991qmr}
R.~Freund and N.~Nachtigal.
\newblock {QMR}: a quasi-minimal residual method for non-{H}ermitian linear
  systems.
\newblock {\em Numerische Mathematik}, 60(1):315--339, 1991.

\bibitem{GHL2014}
K.~L. Greengard, L.~Ho and J.-Y. Lee.
\newblock A fast direct solver for scattering from periodic structures with
  multiple material interfaces in two dimensions.
\newblock {\em J. Comput. Phys.}, 258:738--751, 2014.

\bibitem{greengard2009fast}
L.~Greengard, D.~Gueyffier, P.-G. Martinsson, and V.~Rokhlin.
\newblock Fast direct solvers for integral equations in complex
  three-dimensional domains.
\newblock {\em Acta {N}umerica}, 18(1):243--275, 2009.

\bibitem{GL2014}
L.~Greengard and J.-Y. Lee.
\newblock Stable and accurate integral equation methods for scattering problems
  with multiple material interfaces in two dimensions.
\newblock {\em J. Comput. Phys.}, 231:2389--2395, 2012.

\bibitem{greengard1987fast}
L.~Greengard and V.~Rokhlin.
\newblock A fast algorithm for particle simulations.
\newblock {\em Journal of {C}omputational {P}hysics}, 73(2):325--348, 1987.

\bibitem{greengard1997new}
L.~Greengard and V.~Rokhlin.
\newblock A new version of the fast multipole method for the {L}aplace equation
  in three dimensions.
\newblock {\em Acta {N}umerica}, 6(1):229--269, 1997.

\bibitem{hackbusch2001introduction}
W.~Hackbusch, L.~Grasedyck, and S.~B{\"o}rm.
\newblock {\em An introduction to hierarchical matrices}.
\newblock Max-Planck-Inst. f{\"u}r Mathematik in den Naturwiss., 2001.

\bibitem{hackbusch1989fast}
W.~Hackbusch and Z.~Nowak.
\newblock On the fast matrix multiplication in the boundary element method by
  panel clustering.
\newblock {\em Numerische Mathematik}, 54(4):463--491, 1989.

\bibitem{Helsing2008}
J.~Helsing and R.~Ojala.
\newblock Corner singularities for elliptic problems: Integral equations,
  graded meshes, quadrature, and compressed inverse preconditioning.
\newblock {\em Journal of Computational Physics}, 227(20):8820 -- 8840, 2008.

\bibitem{hestenes1952methods}
M.~Hestenes and E.~Stiefel.
\newblock Methods of conjugate gradients for solving linear systems.
\newblock {\em Journal of Research of the National Bureau of Standards},
  49(6):409--436, 1952.

\bibitem{ho2012fast}
K.~L. Ho and L.~Greengard.
\newblock A fast direct solver for structured linear systems by recursive
  skeletonization.
\newblock {\em SIAM Journal on Scientific Computing}, 34(5):2507--2532, 2012.

\bibitem{JIN1}
J.~Jin.
\newblock A finite element-boundary integral formulation for scattering by
  three-dimensional cavity-backed apertures.
\newblock {\em IEEE Trans. Antennas Propagat.}, 39:97--104, 1991.

\bibitem{JIN3}
J.~Jin.
\newblock {\em The Finite Element Method in Electromagnetics, 2nd edition}.
\newblock Wiley, New York, 2002.

\bibitem{klockner2013quadrature}
A.~Kl{\"o}ckner, A.~Barnett, L.~Greengard, and M.~O'Neil.
\newblock Quadrature by expansion: a new method for the evaluation of layer
  potentials.
\newblock {\em Journal of Computational Physics}, 252:332--349, 2013.

\bibitem{RCS}
E.~Knott, J.~Shaeffer, and M.~Tuley.
\newblock {\em Radar Cross Section, Second edition}.
\newblock Scitech Publishing Inc, Releigh, NC, 2004.

\bibitem{kong2011adaptive}
W.~Y. Kong, J.~Bremer, and V.~Rokhlin.
\newblock An adaptive fast direct solver for boundary integral equations in two
  dimensions.
\newblock {\em {A}pplied and {C}omputational {H}armonic {A}nalysis},
  31(3):346--369, 2011.

\bibitem{liberty2007randomized}
E.~Liberty, F.~Woolfe, P.-G. Martinsson, V.~Rokhlin, and M.~Tygert.
\newblock Randomized algorithms for the low-rank approximation of matrices.
\newblock {\em Proceedings of the National Academy of Sciences}, 104(51):20167,
  2007.

\bibitem{JIN2}
J.~Liu and J.~Jin.
\newblock A special higher order finite-element method for scattering by deep
  cavities.
\newblock {\em IEEE Trans. Antennas Propagat.}, 48:694--703, 2000.

\bibitem{mallat1989theory}
S.~Mallat.
\newblock A theory for multiresolution signal decomposition: The wavelet
  representation.
\newblock {\em Pattern Analysis and Machine Intelligence, IEEE Transactions
  on}, 11(7):674--693, 1989.

\bibitem{martinsson2009fast}
P.-G. Martinsson.
\newblock A fast direct solver for a class of elliptic partial differential
  equations.
\newblock {\em Journal of Scientific Computing}, 38(3):316--330, 2009.

\bibitem{martinsson2005fast}
P.-G. Martinsson and V.~Rokhlin.
\newblock A fast direct solver for boundary integral equations in two
  dimensions.
\newblock {\em Journal of Computational Physics}, 205(1):1--23, 2005.

\bibitem{paige1975solution}
C.~C. Paige and M.~A. Saunders.
\newblock Solution of sparse indefinite systems of linear equations.
\newblock {\em SIAM Journal on Numerical Analysis}, 12(4):617--629, 1975.

\bibitem{rjasanow2002adaptive}
S.~Rjasanow.
\newblock Adaptive cross approximation of dense matrices.
\newblock {\em IABEM 2002, International Association for Boundary Element
  Methods}, 2002.

\bibitem{saad1986gmres}
Y.~Saad and M.~Schultz.
\newblock {GMRES}: A generalized minimal residual algorithm for solving
  nonsymmetric linear systems.
\newblock {\em SIAM Journal on {S}cientific and {S}tatistical {C}omputing},
  7(3):856--869, 1986.

\bibitem{van1992bi}
H.~A. Van~der Vorst.
\newblock BiCGSTAB: A fast and smoothly converging variant of BICG for the
  solution of nonsymmetric linear systems.
\newblock {\em SIAM Journal on scientific and Statistical Computing},
  13(2):631--644, 1992.

\bibitem{WW1987}
A.~Willers and P.~Werner.
\newblock The {H}elmholtz equation in disturbed half-spaces.
\newblock {\em Mathematical Methods in the Applied Sciences}, 9(1):312--323,
  1987.

\bibitem{zhao2005adaptive}
K.~Zhao, M.~N. Vouvakis, and J.-F. Lee.
\newblock The adaptive cross approximation algorithm for accelerated method of
  moments computations of emc problems.
\newblock {\em Electromagnetic Compatibility, IEEE Transactions on},
  47(4):763--773, 2005.

\end{thebibliography}

\bibliographystyle{abbrv}

\end{document}